\def\bfB{\mathbf{B}}
\newcommand{\id}{\operatorname{id}}
\newcommand{\End}{\operatorname{End}}
\newcommand{\Vect}{\operatorname{span}}
\newcommand{\im}{\operatorname{Im}}
\newcommand{\tr}{\operatorname{tr}}
\renewcommand{\setminus}{\smallsetminus}
\def\F{\mathbb{F}}
\def\K{\mathbb{K}}
\def\N{\mathbb{N}}
\def\lcro{\mathopen{[\![}}
\def\rcro{\mathclose{]\!]}}
\theoremstyle{definition}
\newtheorem{Def}{Definition}
\theoremstyle{plain}
\newtheorem{theo}{Theorem}
\newtheorem{prop}[theo]{Proposition}
\newtheorem{cor}[theo]{Corollary}
\newtheorem{lemma}[theo]{Lemma}
\theoremstyle{plain}
\theoremstyle{remark}
\newtheorem{Rems}{Remarks}
\newtheorem{Rem}[Rems]{Remark}
\title{Sums of quadratic endomorphisms of an infinite-dimensional vector space}
\author{Cl\'ement de Seguins Pazzis\footnote{Universit\'e de Versailles Saint-Quentin-en-Yvelines, Laboratoire de Math\'ematiques
de Versailles, 45 avenue des Etats-Unis, 78035 Versailles cedex, France}
\footnote{e-mail address: dsp.prof@gmail.com}}
\begin{document}

\thispagestyle{plain}

\maketitle

\begin{abstract}
We prove that every endomorphism of an infinite-dimensional vector space over a field splits into the sum of four idempotents and into the sum of four square-zero endomorphisms, a result that is optimal in general.
\end{abstract}

\vskip 2mm
\noindent
\emph{AMS Classification:} 15A24; 16B50

\vskip 2mm
\noindent
\emph{Keywords:} Infinite dimension, Decomposition, Square-zero endomorphism, Idempotent, Ordinals.

\section{Introduction}

In trying to decompose an endomorphism of a vector space into a sum/linear combination/product of endomorphisms
of special type, two situations are traditionally studied:
\begin{itemize}
\item The one of
finite-dimensional vector spaces, i.e.\ the matrix case, see e.g. \cite{BallantineI,BallantineII,BallantineIII,BallantineIV,Erdos,WangWu};
\item In the infinite-dimensional setting one considers a real or complex Hilbert space
(or Banach space) and the endomorphisms and the summands (or factors) are required to be bounded operators (see e.g.
\cite{Dawlings,Fillmore,PearcyTopping,WangWu,Wu}).
\end{itemize}

In this article, we shall explore a somewhat neglected territory, in which the vector space is assumed to be infinite-dimensional
and the ground field is totally arbitrary. Hence, there is no structure from analysis involved here and the problem is a purely algebraic one.
Here are basic questions: Which endomorphisms can be written as a (finite) sum of idempotents? of involutions? of square-zero endomorphisms?
If so, what is the minimal number of summands required in such a decomposition?
In those questions, the special endomorphisms are all \textbf{quadratic}, a quadratic endomorphism $u$
being one that satisfies $u^2 \in \Vect(\id,u)$. Hence, we will consider more generally  the question of decomposing
an arbitrary endomorphism into the sum of quadratic endomorphisms with prescribed split annihilating polynomials
of degree $2$.

Before we go on, let us introduce some notation. Throughout the text, $\F$ is an arbitrary field, and $t$
is an indeterminate which we use to write polynomials over $\F$.
We use the French convention for the set of all non-negative integers, which we denote by $\N$.
All the vector spaces that we consider have $\F$ as ground field.
An endomorphism $u$ of an $\F$-vector space $V$ endows $V$ with a structure of $\F[t]$-module
so that $t.x=u(x)$ for all $x \in V$: We use the notation $V^u$ when we speak of $V$ as an $\F[t]$-module.
The endomorphism $u$ is called \textbf{elementary} when the module $V^u$
is free, i.e.\ when it is isomorphic to the direct sum of (potentially infinitely many) copies of $\F[t]$.
 Our basic method will be to start from an endomorphism $u$ of $V$ and, by subtracting well-chosen ``special" endomorphisms,
to obtain an elementary one. In this prospect, the key notion is the one of a \emph{stratification} of an $\F[t]$-module,
which we will define and study in Section \ref{stratsection}.

\section{Main results}

We start with the main results of our article.

\begin{Def}
Let $p_1,\dots,p_n$ be polynomials with coefficients in $\F$.
An endomorphism $u$ of a vector space $V$ is called a \textbf{$(p_1,\dots,p_n)$-sum}
when there exists an $n$-tuple $(u_1,\dots,u_n)$ of endomorphisms of $V$ such that
$$u=\sum_{k=1}^n u_k \quad \text{and} \quad \forall k \in \lcro 1,n\rcro, \; p_k(u_k)=0.$$
\end{Def}

\begin{theo}\label{theo2}
Let $p_1$ and $p_2$ be split polynomials of degree $2$ with coefficients in $\F$.
Then, every elementary endomorphism of a vector space is a $(p_1,p_2)$-sum.
\end{theo}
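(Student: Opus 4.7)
My plan is to prove Theorem 2 by first reducing to a canonical building block using the free $\F[t]$-module structure, and then performing an explicit construction.

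\medskip\noindent
\emph{Reduction.} Since $u$ is elementary, $V^u \cong \bigoplus_{i \in I} \F[t]$ as an $\F[t]$-module, so after choosing such a decomposition we have $V = \bigoplus_{i \in I} C_i$ with each $C_i$ a copy of $\F[t]$ on which $u$ acts as multiplication by $t$. The property of being a $(p_1,p_2)$-sum is manifestly compatible with direct sums: if $u|_{C_i} = u_1^{(i)} + u_2^{(i)}$ with $p_k(u_k^{(i)}) = 0$, then $u_k := \bigoplus_i u_k^{(i)}$ satisfies $p_k(u_k) = 0$ and $u_1 + u_2 = u$. Hence it suffices to treat one building block, either a single copy of $\F[t]$ or (by pairing summands, which is possible whenever $|I|$ is infinite) a sum of finitely many copies.

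\medskip\noindent
\emph{Normalization.} Write $p_1(t) = (t-a)(t-b)$ and $p_2(t) = (t-c)(t-d)$. The substitution $u_1 \mapsto u_1 - a\id$, $u_2 \mapsto u_2 - c\id$, $u \mapsto u - (a+c)\id$ (which preserves elementariness) lets us assume $p_1(t) = t(t-r)$ and $p_2(t) = t(t-s)$ for $r := b-a$ and $s := d-c$. So we aim to write the shift on the building block as $u = u_1 + u_2$ with $u_1(u_1-r) = 0$ and $u_2(u_2-s) = 0$.

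\medskip\noindent
\emph{Construction on the building block.} Choose a basis $(e_n)_{n\ge0}$ of $\F[t]$ with $u(e_n) = e_{n+1}$. The idea is to make $u_1$ have $2$-dimensional invariant subspaces on the pairs $\{e_{2k},e_{2k+1}\}$ on which its matrix has characteristic polynomial $p_1$, and $u_2$ have invariant subspaces on the overlapping pairs $\{e_{2k+1},e_{2k+2}\}$ with characteristic polynomial $p_2$, together with a $1$-dimensional starting block for $u_2$ on $\mathrm{span}(e_0)$. The block matrices have to be chosen so that the relation $u_1 + u_2 = u$ holds column by column in the basis; this forces the entries to satisfy a system of local compatibility relations.

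\medskip\noindent
\emph{Main obstacle and resolution.} The main difficulty is that the simple alternating $2\times 2$ block ansatz sketched above succeeds only under restrictive relations between $a,b,c,d$ (essentially forcing $p_2(t) = p_1(-t)$ or $a+b+c+d = 0$): the sum of local traces accumulated over blocks is incompatible with the ``traceless'' nature of the shift unless these hold. To handle arbitrary split $p_1,p_2$, one has to work on a richer building block, and this is precisely where the stratification formalism of Section~\ref{stratsection} is meant to be invoked: a well-chosen stratification of the free $\F[t]$-module $V^u$ produces a basis in which the shift admits a more intricate decomposition into interlocking $u_1$- and $u_2$-invariant sub-modules, whose local parameters can be adjusted independently block by block. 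The verification that $p_1(u_1) = 0$ and $p_2(u_2) = 0$ then reduces, on each stratum, to checking a finite-dimensional identity for explicit $2\times 2$ (or slightly larger) matrices — a routine calculation once the stratified basis has been set up.
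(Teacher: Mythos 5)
Your reductions (to a single free summand, and to the normalized forms $p_1=t^2-rt$, $p_2=t^2-st$) are sound and coincide with Remarks \ref{directsumremark} and \ref{canonicalremark}, and your diagnosis of the obstacle is essentially right: if you insist that $u$ act as the exact shift $e_n\mapsto e_{n+1}$ in the very basis where $u_1$ and $u_2$ are block-diagonal on overlapping pairs, the column-by-column compatibility relations pin down the blocks and they are annihilated by $p_1$, $p_2$ only under special relations among the roots (roughly, the roots of $p_2$ must be negatives of roots of $p_1$). But your ``resolution'' is a genuine gap, not a proof: appealing to ``the stratification formalism'' produces no construction, and in fact stratifications play no role in the paper's proof of Theorem \ref{theo2} --- they are the tool for Theorem \ref{theo4} (subtracting two quadratic maps from an arbitrary endomorphism to make it elementary), not for decomposing an elementary endomorphism. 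As written, the core of the theorem is missing.

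The missing idea is to drop the requirement that the summands be expressed in a basis in which $u$ is literally the shift. Since all free $\F[t]$-modules of rank one are isomorphic, it suffices to construct \emph{some} space $U$ with endomorphisms $v,w$ such that $v^2=a\,v$, $w^2=b\,w$ and $U^{v+w}$ is free with one generator; the decomposition is then transported to $u$ by an isomorphism $V^u\cong U^{v+w}$ (this uses your reduction to a single generator). Concretely, take a basis $(e_n)_{n\in\N}$ of $U$ and set $v(e_{2k})=a\,e_{2k}+e_{2k+1}$, $v(e_{2k+1})=0$, $w(e_{2k+1})=b\,e_{2k+1}+e_{2k+2}$, $w(e_{2k})=0$. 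One checks directly that $v^2=a\,v$ and $w^2=b\,w$, and that $(v+w)(e_n)=e_{n+1}$ modulo $\Vect(e_0,\dots,e_n)$; by Lemma \ref{elementarylemma} the family $\bigl((v+w)^n(e_0)\bigr)_{n\in\N}$ is a basis of $U$, so $U^{v+w}$ is free of rank one. This evades the obstruction you found precisely because $v+w$ is only required to be \emph{conjugate} to the shift, not equal to it in the basis where $v$ and $w$ have their simple alternating form.
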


\begin{theo}\label{theo4}
Let $p_1,p_2,p_3,p_4$ be split polynomials of degree $2$ with coefficients in $\F$.
Then, every endomorphism of an infinite-dimensional vector space is a $(p_1,p_2,p_3,p_4)$-sum.
\end{theo}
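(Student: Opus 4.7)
The plan is to reduce Theorem~\ref{theo4} to Theorem~\ref{theo2}. Given an arbitrary endomorphism $u$ of an infinite-dimensional $\F$-vector space $V$, I would look for endomorphisms $u_1,u_2$ of $V$ with $p_1(u_1)=0$ and $p_2(u_2)=0$ such that the remainder
\[
w := u - u_1 - u_2
\]
is elementary, i.e.\ $V^{w}$ is a free $\F[t]$-module. Once this is achieved, Theorem~\ref{theo2} applied to $w$ with the polynomials $p_3,p_4$ yields endomorphisms $u_3,u_4$ with $p_3(u_3)=0$, $p_4(u_4)=0$, and $w=u_3+u_4$, whence $u=u_1+u_2+u_3+u_4$ is a $(p_1,p_2,p_3,p_4)$-sum.

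The construction of $u_1$ and $u_2$ is where I would invoke the stratification machinery announced in the introduction. I expect a stratification of $V^u$ to be a well-ordered continuous chain $(V_\alpha)_{\alpha \le \lambda}$ of $u$-stable subspaces, with $V_0=\{0\}$, $V_\lambda=V$, $V_\alpha=\bigcup_{\beta<\alpha}V_\beta$ at limit ordinals, and successor quotients $V_{\alpha+1}/V_\alpha$ that are small and tractable (typically finitely generated, or even cyclic, as $\F[t]$-modules). Assuming such stratifications exist for every endomorphism of $V$, I would build $u_1$ and $u_2$ by transfinite induction on $\alpha$: at each successor step $\alpha \to \alpha+1$, I would choose an $\F$-linear complement $W_\alpha$ of $V_\alpha$ in $V_{\alpha+1}$ and extend the already-constructed $u_1|_{V_\alpha}$ and $u_2|_{V_\alpha}$ by prescribing their action on $W_\alpha$ so that (i) the relations $p_1(u_1)=0$ and $p_2(u_2)=0$ propagate to $V_{\alpha+1}$, and (ii) the induced endomorphism of $V_{\alpha+1}/V_\alpha$ obtained from $u - u_1 - u_2$ becomes free as an $\F[t]$-module. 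At limit ordinals, the endomorphisms constructed so far automatically glue to $\bigcup_{\beta<\alpha}V_\beta=V_\alpha$ by continuity of the stratification. The infinite dimension of $V$ is used crucially to provide enough ``room'' at each step to arrange the free quotient, and to ensure that the inductively obtained $V^w$ is a \emph{free} $\F[t]$-module (not merely a module with free successor quotients).

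The main obstacle I anticipate is precisely this last point: while it is relatively believable that one can, step-by-step, extend $u_1,u_2$ so that each finite or cyclic piece $V_{\alpha+1}/V_\alpha$ contributes a free summand to the remainder, ensuring that the globally assembled $\F[t]$-module $V^w$ is itself free (rather than just an extension of free modules, which over $\F[t]$ can still fail to be free in the transfinite context without additional care) will require a delicate choice of the extensions $W_\alpha$ and of the prescribed action of $u_1,u_2$ on them. A secondary difficulty is the bookkeeping needed to simultaneously accommodate \emph{two} distinct split degree-$2$ polynomials $p_1$ and $p_2$, whose roots may or may not coincide; I would handle this by analyzing the common eigenspace decomposition of a would-be $(u_1,u_2)$ on each piece $W_\alpha$, using the two pairs of roots of $p_1$ and $p_2$ to produce enough flexibility to both satisfy the annihilating relations and arrange the freeness of the quotient action.
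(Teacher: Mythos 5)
Your global strategy is exactly the paper's: find $u_1,u_2$ with $p_1(u_1)=p_2(u_2)=0$ such that $u-u_1-u_2$ is elementary, then finish with Theorem \ref{theo2} applied to $p_3,p_4$. But the proposal stops where the actual work begins, and the inductive scheme you sketch would not go through as stated. The quotients $V_{\alpha+1}/V_\alpha$ in a stratification are cyclic $\F[t]$-modules, typically finite-dimensional, and no choice of $u_1,u_2$ on a complement $W_\alpha$ can make the endomorphism induced by $u-u_1-u_2$ on such a quotient free: a nonzero free $\F[t]$-module is infinite-dimensional over $\F$. The paper's key idea, which is absent from your plan, is that freeness is not arranged stratum by stratum but by \emph{chaining} strata: one subtracts a ``connector'' that sends the top vector $u^{n_\alpha-1}(x_\alpha)$ of each finite-dimensional stratum onto the generator $x_{\alpha+1}$ of the next one, so that infinitely many consecutive finite strata merge into a single infinite-dimensional free stratum (Proposition \ref{connector}, whose proof rests on Lemma \ref{elementarylemma} and a regrouping of the index set). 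This also exposes a boundary issue you do not address: if the stratification ends in a finite-dimensional stratum, the connector has nowhere to send its top vector; the paper circumvents this by first producing a stratification indexed by a cardinal, or with $V_0$ infinite-dimensional (Proposition \ref{existstrat}), and in the latter case re-indexing so that the infinite-dimensional stratum comes last (Case 2 of Proposition \ref{from4to2}). This is the only place where infinite-dimensionality of $V$ is used, not as generic ``room at each step.''

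Your second worry, about accommodating two different split quadratics, also remains unresolved: an analysis of ``common eigenspaces'' of the yet-to-be-built $u_1,u_2$ is not a construction. The paper's solution is much more concrete: first reduce via Remark \ref{canonicalremark} to $p_1=t^2-at$ and $p_2=t^2-bt$, then split the connector into two maps supported respectively on the even-indexed and odd-indexed strata. Because $u_1$ sends the top of an even stratum into the span of itself and $x_{\alpha+1}$, and kills $x_{\alpha+1}$ (which sits in an odd stratum), the relation $u_1^2=a\,u_1$ is immediate, and likewise for $u_2$. Without the connector mechanism, the merging argument that yields freeness of the whole remainder, the treatment of a possible last finite stratum, and this even/odd splitting device, the proposal is a restatement of the goal rather than a proof.
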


\begin{cor}\label{4cor}
Every endomorphism of an infinite-dimensional vector space is the sum of four square-zero endomorphisms
and the sum of four idempotent endomorphisms.
\end{cor}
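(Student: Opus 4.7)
The plan is to observe that Corollary \ref{4cor} is an immediate specialization of Theorem \ref{theo4}, obtained by two particular choices of split quadratic polynomials. There is essentially no additional work beyond recognizing which polynomials annihilate the two classes of endomorphisms under consideration.

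For the square-zero statement, I would apply Theorem \ref{theo4} with $p_1=p_2=p_3=p_4 = t^2$. The polynomial $t^2 = (t-0)(t-0)$ is split of degree $2$ over $\F$, and an endomorphism $v$ satisfies $t^2(v) = 0$ precisely when $v^2 = 0$, i.e.\ when $v$ is square-zero. The theorem then yields a decomposition $u = u_1+u_2+u_3+u_4$ in which each $u_k$ is square-zero.

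For the idempotent statement, I would instead take $p_1=p_2=p_3=p_4 = t^2 - t = t(t-1)$. Since $0, 1 \in \F$, this polynomial is split of degree $2$ over any field $\F$, and $p_k(v) = 0$ holds if and only if $v^2 = v$, i.e.\ $v$ is idempotent. Again Theorem \ref{theo4} produces the desired decomposition into four idempotents.

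The main obstacle is nonexistent here: the entire technical difficulty has been absorbed into Theorem \ref{theo4}. The only conceptual point worth emphasizing is that the framework of $(p_1,\dots,p_n)$-sums introduced in the paper is precisely designed so that such corollaries follow mechanically once the polynomials $p_k$ corresponding to the targeted class of ``special'' endomorphisms are identified and verified to be split of degree $2$.
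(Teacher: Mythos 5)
Your proposal is correct and matches the paper's intent exactly: the corollary is stated as an immediate specialization of Theorem \ref{theo4}, with $p_1=\dots=p_4=t^2$ for the square-zero case and $p_1=\dots=p_4=t^2-t$ for the idempotent case, and no further argument is needed.
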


R. S\l{}owik recently showed \cite{Slowik} that over a field with characteristic different from $2$,
any endomorphism of a vector space with (infinite) countable dimension is the sum of ten square-zero endomorphisms.
On the other hand, in an infinite-dimensional complex Hilbert space it is known that every bounded operator is the sum of five square-zero ones,
and the result is optimal \cite{PearcyTopping}.

By checking the details of the proof of Theorem \ref{theo4}, the reader will easily convince himself
that Corollary \ref{4cor} can be extended to left vector spaces over an arbitrary division ring.
Better still, Theorems \ref{theo2} and \ref{theo4} can be generalized to that setting provided that
the polynomials under consideration have all their coefficients central in the said division ring
(the proof is similar but cannot be framed in the theory of modules).

Before we go on, let us discuss the optimality of Corollary \ref{4cor}:

\begin{prop}\label{not3squarezero}
Let $u$ be a finite-rank endomorphism of a vector space $V$, and assume that $u$ is the sum of
three square-zero endomorphisms of $V$. Then, $\tr(u)=0$.
\end{prop}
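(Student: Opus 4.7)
The plan is to locate a finite-dimensional subspace $W\subseteq V$ that contains $\im(u)$ and is stable under each of the square-zero summands $a,b,c$. Once such a $W$ is in hand, each restriction $a|_W,b|_W,c|_W$ is a square-zero endomorphism of a finite-dimensional space, hence nilpotent, hence has trace $0$. Because $W\supseteq \im(u)$ and $W$ is $u$-invariant, the finite-rank trace of $u$ coincides with $\tr(u|_W)$, and splitting gives
\[
\tr(u)=\tr(u|_W)=\tr(a|_W)+\tr(b|_W)+\tr(c|_W)=0.
\]

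To build $W$, I would set $W_0:=\im(u)$ (finite-dimensional since $u$ has finite rank) and try the candidate
\[
W:=W_0+b(W_0)+c(W_0)+bc(W_0)+cb(W_0),
\]
a finite sum of finite-dimensional subspaces. Observe first that $a$-invariance is automatic as soon as $W$ is $b$- and $c$-invariant: indeed, for $w\in W$ the identity $a=u-b-c$ gives $a(w)=u(w)-b(w)-c(w)\in W_0+b(W)+c(W)\subseteq W$.

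The main work is $b$- and $c$-invariance. Since $b^2=c^2=0$, the only potentially troublesome terms are $b(cb(W_0))=bcb(W_0)$ and $c(bc(W_0))=cbc(W_0)$. Here I would use the identity obtained from expanding $a^2=(u-b-c)^2=0$, namely
\[
bc+cb=u^2-ua-au.
\]
Applied to a vector of the form $bw$ with $w\in W_0$, the term $cb(bw)$ vanishes (because $b^2=0$), leaving $bcb(w)=u^2(bw)-ua(bw)-au(bw)$. The first two terms lie in $\im(u)=W_0$; for the third, $u(bw)\in W_0$ and $a(W_0)\subseteq W_0+b(W_0)+c(W_0)\subseteq W$ (again via $a=u-b-c$). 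Hence $bcb(W_0)\subseteq W$, and the symmetric argument handles $cbc(W_0)$.

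The only delicate point is thus the finite-dimensionality of $W$: one must show that the closure of $W_0$ under the two nilpotent operators $b,c$ does not blow up. The mechanism preventing this is precisely the quadratic relation $a^2=0$ combined with $b^2=c^2=0$, which forces the alternating words in $b,c$ applied to $W_0$ to collapse into the previous layers from depth $3$ onward. Once that collapse is established, everything else is bookkeeping.
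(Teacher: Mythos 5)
Your proof is correct and follows essentially the same route as the paper: both construct a finite-dimensional subspace containing $\im u$ that is invariant under $a,b,c$ by adjoining images of $\im u$ under short words in the square-zero summands, and both tame the length-three words via the identity obtained by expanding the square of a sum of two summands (you use $(b+c)^2=(u-a)^2$, the paper uses $(a+b)^2=(u-c)^2$). Your five-term subspace $\im u + b(\im u)+c(\im u)+bc(\im u)+cb(\im u)$, with $a$-invariance deduced from $a=u-b-c$, is in fact a slightly leaner variant of the paper's eight-term $W$, but the mechanism is identical.
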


In particular, no rank $1$ idempotent of $V$ is the sum of three square-zero endomorphisms since its trace equals $1$,
which proves that Corollary \ref{4cor} is optimal as far as square-zero endomorphisms are concerned.

\begin{proof}[Proof of Proposition \ref{not3squarezero}]
Let us split $u=a+b+c$ for square-zero endomorphisms $a,b,c$ of $V$.
We claim that the finite-dimensional subspace
$$W:=\im u+\im (au)+\im (bu)+\im (cu)+\im (abu)+\im (acu)+\im (bcu)+\im (abcu)$$
is stable under $a$, $b$ and $c$. It is obvious that $W$ is stable under $a$ since $a^2=0$.
Next, we claim that $W$ contains
$$W':=\im u+\im (au)+\im (bu)+\im (cu)+\im (bau)+\im (acu)+\im (bcu)+\im (bacu).$$
Indeed, we note first that $ab+ba=(a+b)^2=(u-c)^2=u(u-c)-cu$, and hence $\im(bau) \subset \im(abu)+\im(u)+\im(cu) \subset W$.
Likewise, we find $\im (bacu) \subset \im(abcu)+\im(u)+\im(cu) \subset W$, which proves the claimed inclusion.
Symmetrically, one obtains $W' \subset W$, and hence $W'=W$. As $W'$ is stable under $b$ -- again, because $b^2=0$ --
we conclude that $W$ is stable under $b$. Finally, $W$ contains $\im u$ and hence it is stable under $u$,
whence $W$ is stable under $c=u-a-b$.

From there, we denote by $u',a',b',c'$ the endomorphisms of $W$ induced by $u,a,b,c$ respectively, so that $u'=a'+b'+c'$.
Note now that $\tr(u')=\tr(u)$ since $W$ contains $\im u$: indeed, by completing a basis $\bfB$ of $\im u$ into a basis $\bfB'$ of $W$,
we get that the matrix of $u'$ in $\bfB'$ reads
$$\begin{bmatrix}
A & [?] \\
[0] & [0]
\end{bmatrix}$$
where $A$ denotes the matrix of $u_{|\im u}$ in $\bfB$, and hence $\tr u'=\tr A=\tr u_{|\im u}=\tr u$.

 On the other hand $a'$, $b'$ and $c'$ are square-zero endomorphisms of $W$,
 which leads to $\tr(a')=\tr(b')=\tr(c')=0$. Hence, $\tr(u)=\tr(u')=0$.
\end{proof}

Now, let us turn to idempotents:

\begin{prop}\label{not3idem}
Let $\alpha \in \F \setminus \bigl\{0,1,2,3\bigr\}$ be such that $2\alpha \neq 3$, and let $V$ be a vector space over $\F$.
Then, $\alpha\,\id_V$ is not the sum of three idempotent endomorphisms of $V$.
\end{prop}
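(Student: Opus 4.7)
The plan is to argue by contradiction, using the hypothesis $2\alpha \neq 3$ to force $p$ and $q$ to commute, and then exploiting the resulting joint eigenspace decomposition. The case $V=0$ is trivial, so assume $V\neq 0$ and suppose $\alpha\,\id_V = p+q+r$ with $p,q,r$ idempotents of $V$.

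First, setting $r := \alpha\,\id_V - p - q$ and expanding $r^2 = r$ while using $p^2=p$ and $q^2=q$, one obtains the key identity
\[
pq + qp = (2\alpha-2)(p+q) + \alpha(1-\alpha)\,\id_V. \qquad (\star)
\]
Next, compute the triple product $pqp$ in two different ways. Substituting the value of $qp$ from $(\star)$ into $p\cdot(qp)$ gives $pqp = (3\alpha-2-\alpha^2)\,p + (2\alpha-3)\,pq$, while substituting the value of $pq$ from $(\star)$ into $(pq)\cdot p$ gives $pqp = (3\alpha-2-\alpha^2)\,p + (2\alpha-3)\,qp$. Equating these two expressions yields $(2\alpha-3)(pq-qp) = 0$, and the hypothesis $2\alpha\neq 3$ then forces $pq = qp$.

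Now that $p$ and $q$ are commuting idempotents, $V$ decomposes as the direct sum of the four joint eigenspaces $V_{ij} := \Ker(p-i\,\id_V)\cap \Ker(q-j\,\id_V)$ for $(i,j)\in\{0,1\}^2$. On each $V_{ij}$, the endomorphism $r$ acts as the scalar $\alpha - i - j$. Since $r^2 = r$, whenever $V_{ij}\neq 0$ this scalar must lie in $\{0,1\}$, i.e.\ $\alpha \in \{i+j,\,i+j+1\}$. As $V\neq 0$, at least one $V_{ij}$ is nonzero, whence $\alpha \in \{0,1,2,3\}$, contradicting the hypothesis.

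The main obstacle is the commutation step; once $pq=qp$ is established the conclusion is almost automatic. The role of the condition $2\alpha\neq 3$ is precisely to make this step work: when $2\alpha=3$, the identity $(2\alpha-3)(pq-qp)=0$ becomes vacuous and a genuinely noncommutative argument would be required, which is consistent with the fact that this exceptional value of $\alpha$ must be excluded in the statement.
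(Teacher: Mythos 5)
Your proof is correct and takes essentially the same approach as the paper: both argue by contradiction, use the hypothesis $2\alpha\neq 3$ to force commutativity among the idempotents, and then conclude via an eigenspace decomposition that $\alpha$ would have to be a sum of elements of $\{0,1\}$, contradicting $\alpha\notin\{0,1,2,3\}$. The only minor difference is the mechanism for commutativity (you compute $pqp$ in two ways from the symmetrized identity, whereas the paper notes that $q$ and $r$ commute with $(q+r)(2\,\id_V-q-r)=\alpha(2-\alpha)\id_V+(2\alpha-3)p$) and that you only need $pq=qp$, since $r$ then acts as a scalar on each joint eigenspace.
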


\begin{proof}
Assume on the contrary that $\alpha\,\id_V=p+q+r$ for some idempotents $p,q,r$.
Note first that $q$ and $r$ both commute with $(q+r)(2\id_V-q-r)=q+r-qr-rq$
(for instance, $q(q+r-qr-rq)=q-qrq=(q+r-qr-rq)q$).
However, $(q+r)(2\id_V-q-r)=(\alpha \id_V-p)((2-\alpha)\id_V+p)=\alpha(2-\alpha) \id_V+(2\alpha-3) p$.
Since $2\alpha-3 \neq 0$, we deduce that both $q$ and $r$ commute with $p$.
Symmetrically, $q$ commutes with $r$. Hence, $p,q,r$ are simultaneously diagonalizable,
which leads to $\alpha$ being the sum of three elements of $\{0_\F,1_\F\}$, contradicting the assumption that $\alpha \not\in \{0,1,2,3\}$.
\end{proof}

In particular, if $\F$ has characteristic $2$ and more than $2$ elements,
no finite-rank endomorphism of $V$ with trace outside of the prime subfield of $\F$
is the sum of three idempotents. We suspect however that
every endomorphism of $V$ is the sum of three idempotents if $\F=\F_2$:
indeed, the result is known to hold over finite-dimensional spaces, see \cite{dSPidempotentLC}.

\begin{Rem}\label{directsumremark}
Let $u$ be an endomorphism of a vector space $V$. Let $p_1,\dots,p_r$ be polynomials with coefficients in $\F$.
Assume that $V$ splits as $V=\underset{i \in I}{\bigoplus} V_i$ where each linear subspace $V_i$ is stable under $u$
and we denote by $u_i$ the induced endomorphism. Assume also that for all $i \in I$,
the endomorphism $u_i$ splits as $u_i=\underset{k=1}{\overset{r}{\sum}} u_{i,k}$ where $u_{i,k}\in \End(V_i)$ and $p_k(u_{i,k})=0$
for all $k \in \lcro 1,r\rcro$.
Then, by setting $u^{(k)}:=\underset{ i \in I}{\bigoplus} u_{i,k}$, we see that
$u=\underset{k=1}{\overset{r}{\sum}} u^{(k)}$ and $p_k(u^{(k)})=0$ for all $k \in \lcro 1,r\rcro$.
\end{Rem}

\begin{Rem}[The canonical situation]\label{canonicalremark}
In both Theorems \ref{theo2} and \ref{theo4}, we can reduce the situation to the one where
each polynomial under consideration has the form $t^2-at$ for some $a \in \F$.
Indeed, let $p_1,\dots,p_r$ be split polynomials with degree $2$ over $\K$.
For each $k$, denote by $x_k,y_k$ the roots of $p_k$ and note that an endomorphism $v$ is annihilated by $p_k$
if and only if $v-x_k\id$ is annihilated by $t^2-(y_k-x_k)t$.
Given $u \in \End(V)$, we deduce that $u$ is a $(p_1,\dots,p_k)$-sum
if and only $u-\Bigl(\underset{k=1}{\overset{r}{\sum}} x_k\Bigr)\,\id$ is a $\bigl(t^2-(y_1-x_1)t,\dots,t^2-(y_r-x_r)t\bigr)$-sum.
In both Theorems \ref{theo2} and \ref{theo4}, we note that the assumption on the endomorphism $u$
is left invariant by subtracting a scalar multiple of the identity from $u$ (for Theorem \ref{theo2}, note that $V^u$ is free if and only if $V^{u-\lambda \id}$ if free, owing to the fact that $p(t) \mapsto p(t+\lambda)$ is an automorphism of the $\F$-algebra $\F[t]$).
Hence, in both theorems, it will suffice to consider the case when each polynomial $p_k$
has the form $t^2-at$ for some $a \in \F$ (depending on $k$).
\end{Rem}

Theorem \ref{theo2} will be proved in Section \ref{elementarysection}.
The proof of Theorem \ref{theo4} is spread over Sections \ref{stratsection} and \ref{theo4proofsection}.

\section{Decomposing an elementary operator}\label{elementarysection}

Here, we give a quick proof of Theorem \ref{theo2}.
We will need the following basic lemma, which is folklore:

\begin{lemma}\label{elementarylemma}
Let $u$ be an endomorphism of a vector space $V$ with (infinite) countable dimension, and let $(e_n)_{n \in \N}$ be a basis of $V$.
Assume that $u(e_n)=e_{n+1}$ mod $\Vect(e_0,\dots,e_n)$ for all $n \in \N$. Then,
$(u^n(e_0))_{n \in \N}$ is a basis of $V$.
\end{lemma}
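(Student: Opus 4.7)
The plan is to show, by induction on $n$, the refined statement that
$$u^n(e_0) \equiv e_n \pmod{\Vect(e_0,\dots,e_{n-1})}$$
for every $n \in \N$ (with the convention that $\Vect(e_0,\dots,e_{-1})=\{0\}$). The base case $n=0$ is trivial. For the inductive step, if $u^n(e_0)=e_n+v$ with $v \in \Vect(e_0,\dots,e_{n-1})$, then I apply $u$ and use the hypothesis on $u$ twice: once to write $u(e_n)=e_{n+1}+w$ with $w \in \Vect(e_0,\dots,e_n)$, and once to observe that $u(v) \in \Vect\bigl(u(e_0),\dots,u(e_{n-1})\bigr) \subset \Vect(e_0,\dots,e_n)$, since each $u(e_k)$ lies in $\Vect(e_0,\dots,e_{k+1})$. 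Summing yields $u^{n+1}(e_0) \equiv e_{n+1} \pmod{\Vect(e_0,\dots,e_n)}$.

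From this congruence I deduce, again by induction on $N$, that for every $N \in \N$ one has the equality of subspaces
$$\Vect\bigl(e_0,e_1,\dots,e_N\bigr)=\Vect\bigl(u^0(e_0),u^1(e_0),\dots,u^N(e_0)\bigr),$$
since the transition matrix from $(e_n)_{0 \leq n \leq N}$ to $(u^n(e_0))_{0 \leq n \leq N}$ is upper-triangular with $1$'s on the diagonal.

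To conclude, linear independence of $(u^n(e_0))_{n \in \N}$ follows because any nontrivial linear dependence with highest index $N$ would force the coefficient of $e_N$ to vanish, contradicting the triangular structure; and the spanning property follows from $V=\bigcup_{N \in \N} \Vect(e_0,\dots,e_N)$ together with the displayed equality. The proof has no real obstacle: the only content is the triangular structure of the family $(u^n(e_0))_{n \in \N}$ with respect to the given basis, which is immediate from iterating the assumption on $u$.
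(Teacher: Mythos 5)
Your proof is correct: the unitriangular relation $u^n(e_0)\equiv e_n \pmod{\Vect(e_0,\dots,e_{n-1})}$, established by the induction you give, immediately yields both linear independence and spanning. The paper states this lemma as folklore without proof, and your argument is exactly the standard one it implicitly relies on.
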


In light of Remark \ref{canonicalremark}, we can limit the discussion to the following situation:
Let $u$ be an elementary endomorphism of a vector space $V$, and $a$ and $b$ be scalars.
We have to prove that there exist endomorphisms $v$ and $w$ of $V$ such that $u=v+w$,
$v^2=a\,v$ and $w^2=b\,w$.
In this prospect, we know from Remark \ref{directsumremark} that it suffices to prove this when $V^u$ has
a single generator, say $x$. Since all the non-zero free $\F[t]$-modules with one generator are isomorphic,
we can turn the question on its head: It suffices to
\emph{construct} a non-zero vector space
$U$ over $\F$ and a pair $(v,w)$ of endomorphisms of $U$ such that $v^2=a\,v$, $w^2=b\,w$, and the $\F[t]$-module $U^{v+w}$ has a generator.
Indeed, assume that we have done so and let $V$ be a non-zero vector space and $u$ be an endomorphism of $V$ 
such that $V^u$ is free with one generator; then, we choose an isomorphism
$\varphi : V^u \rightarrow U^{v+w}$, and we see that $v':=\varphi^{-1} \circ v \circ \varphi$ and $w':=\varphi^{-1} \circ w \circ \varphi$
are endomorphisms of $V$ that are respectively annihilated by $t^2-at$ and $t^2-bt$, while $u=v'+w'$. 

Now, we take an arbitrary vector space $U$ over $\F$ with a countable basis $(e_n)_{n \in \N}$
on which we define two endomorphisms $v$ and $w$ as follows:
\begin{itemize}
\item $v(e_k)=a\, e_k+e_{k+1}$ for every even $k \in \N$, otherwise $v(e_k)=0$;
\item $w(e_k)=b\, e_k+e_{k+1}$ for every odd $k\in \N$, otherwise $w(e_k)=0$.
\end{itemize}
For every $k \in \N$, we have $v^2(e_k)=0=a\,v(e_k)$ if $k$ is odd, otherwise
$v^2(e_k)=v(a\,e_k+e_{k+1})=a\,v(e_k)+0=a\, v(e_k)$ since $k+1$ is even.
Hence, $v^2=a\, v$. Likewise, one proves that $w^2=b\, w$.

Setting $u:=v+w$, we see that $u$ satisfies the condition of Lemma \ref{elementarylemma}, whence
$(u^n(e_0))_{n \in \N}$ is a basis of $U$. It follows that $U^u$ is a free $\F[t]$-module with one generator, which
completes the proof of Theorem \ref{theo2}.

\section{Stratifications}\label{stratsection}

\subsection{Stratifications and associated objects}

\begin{Def}
Let $V$ be an $\F[t]$-module.
A \textbf{stratification} of $V$ is an increasing sequence $(V_\alpha)_{\alpha \in D}$, indexed over a well-ordered set $D$,
of submodules of $V$ in which:
\begin{itemize}
\item For all $\alpha \in D$, the quotient module $V_\alpha/\biggl(\underset{\beta<\alpha}{\sum} V_\beta\biggr)$
is non-zero and has a generator;
\item $V=\underset{\alpha \in D}{\sum} V_\alpha$.
\end{itemize}
To any such stratification, we assign the \textbf{dimension sequence} $(n_\alpha)_{\alpha \in D}$ defined by $$n_\alpha:=\dim_\F\biggl(V_\alpha/\underset{\beta<\alpha}{\sum} V_\beta\biggr)$$
(in the infinite-dimensional case, we consider
the dimension to be $+\infty$, not the first infinite ordinal $\aleph_0$).
\end{Def}

Let $(V_\alpha)_{\alpha \in D}$ be a stratification of $V$.
For every $\alpha \in D$, we can choose a vector $x_\alpha \in V_\alpha$ such that $V_\alpha=\F[t] x_\alpha +\underset{\beta<\alpha}{\sum} V_\beta$,
and we note that if $n_\alpha$ is finite then
$V_\alpha=\F_{n_\alpha-1}[t]\,x_\alpha \oplus \underset{\beta<\alpha}{\sum} V_\beta$ and $(t^k x_\alpha)_{0 \leq k<n_\alpha}$
is linearly independent, otherwise $(t^k x_\alpha)_{0 \leq k<+\infty}$ is linearly independent and
$V_\alpha=\F[t]\,x_\alpha \oplus \underset{\beta<\alpha}{\sum} V_\beta$.
We shall say that the \textbf{vector sequence} $(x_\alpha)_{\alpha \in D}$ is attached to $(V_\alpha)_{\alpha \in D}$.
In this case, an obvious transfinite induction shows that, for all $\alpha$ and $\beta$ in $D$ with $\beta<\alpha$,
the family $(t^k\, x_\delta)_{\beta \leq \delta \leq \alpha, \; 0 \leq k <n_\delta}$ is linearly independent and
\begin{equation}\label{complet}
\begin{cases}
\biggl(\underset{\gamma<\beta}{\sum} V_\gamma\biggr) \oplus \Vect\bigl((t^k x_\delta)_{\beta \leq \delta \leq \alpha, \; 0 \leq k <n_\delta}\bigr)
& =V_\alpha \\
\biggl(\underset{\gamma<\beta}{\sum} V_\gamma\biggr) \oplus \Vect\bigl((t^k x_\delta)_{\beta \leq \delta < \alpha, \; 0 \leq k <n_\delta}\bigr)
& =\underset{\gamma<\alpha}{\sum} V_{\gamma.}
\end{cases}
\end{equation}
In particular, $(t^k\,x_\alpha)_{\alpha \in D, \; 0 \leq k <n_\alpha}$ is a basis of $V$.
As a special case, we get the obvious consequence:

\begin{lemma}\label{allinfinity}
Let $V$ be an $\F[t]$-module with a stratification $(V_\alpha)_{\alpha \in D}$.
Assume that the corresponding dimension sequence $(n_\alpha)_{\alpha \in D}$ is constant with sole value $+\infty$.
Then, $V$ is free.
\end{lemma}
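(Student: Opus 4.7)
The plan is straightforward: invoke the vector sequence $(x_\alpha)_{\alpha \in D}$ attached to the stratification described just above the lemma, and exhibit $V$ as the direct sum of the cyclic submodules $\F[t]\,x_\alpha$, each of which is itself free of rank one. This will yield the claim by the very definition of a free $\F[t]$-module.

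First I would fix an attached vector sequence $(x_\alpha)_{\alpha \in D}$. Since $n_\alpha=+\infty$ for every $\alpha \in D$, the remarks preceding the lemma give, for every $\alpha \in D$, the decomposition
$$V_\alpha=\F[t]\,x_\alpha \oplus \underset{\beta<\alpha}{\sum} V_\beta,$$
together with the $\F$-linear independence of $(t^k x_\alpha)_{k \geq 0}$. This linear independence says exactly that the annihilator of $x_\alpha$ in $\F[t]$ is zero, so the cyclic submodule $\F[t]\,x_\alpha$ is free of rank one over $\F[t]$.

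Next I would assemble these local decompositions globally. Using the identities (\ref{complet}), a transfinite induction on $\alpha \in D$ shows that
$$V_\alpha=\underset{\beta \leq \alpha}{\bigoplus}\F[t]\,x_\beta \quad \text{for every } \alpha \in D.$$
Taking the sum over all $\alpha$ and using $V=\sum_{\alpha \in D} V_\alpha$ then gives $V=\bigoplus_{\alpha \in D}\F[t]\,x_\alpha$. Combined with the rank-one freeness of each summand, this presents $V$ as a free $\F[t]$-module.

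There is no genuine obstacle here: the lemma is essentially a translation, into module-theoretic language, of the fact -- already established before its statement -- that the family $(t^k x_\alpha)_{\alpha \in D,\ k \geq 0}$ is an $\F$-basis of $V$. The only care needed is the transfinite bookkeeping in the induction step, but that is precisely what the displayed identities (\ref{complet}) are designed to handle.
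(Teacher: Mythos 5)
Your proof is correct and follows the same route the paper intends: the lemma is stated there as an immediate consequence of the preceding discussion, namely that by \eqref{complet} the family $(t^k x_\alpha)_{\alpha \in D,\, 0 \leq k < n_\alpha}$ is an $\F$-basis of $V$, so that when every $n_\alpha=+\infty$ one gets $V=\bigoplus_{\alpha\in D}\F[t]\,x_\alpha$ with each summand free of rank one. Your transfinite induction simply makes this ``obvious consequence'' explicit; nothing further is needed.
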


Conversely, consider a sequence $(x_\alpha)_{\alpha \in D}$, indexed over a well-ordered set $D$,
of vectors of $V$ such that $x_\alpha \not\in \underset{\beta <\alpha}{\sum} \F[t]\, x_\beta$ for all
$\alpha \in D$, and $V=\underset{\alpha \in D}{\sum} \F[t]\, x_\alpha$.
Then, one sees that $\Bigl(\underset{\beta \leq \alpha}{\sum} \F[t]\,x_\beta\Bigr)_{\alpha \in D}$
is a stratification of $V$ with corresponding vector sequence $(x_\alpha)_{\alpha \in D}$.

Of course, any stratification can be re-indexed over an ordinal, and we will often
assume that the stratifications we are dealing with are indexed over ordinals.

\subsection{Connectors for a stratification}

\begin{Def}\label{defconnector}
Let $u$ be an endomorphism of a vector space $V$.
Let $(V_\alpha)_{\alpha \in D}$ be a stratification of $V^u$, with attached dimension sequence
$(n_\alpha)_{\alpha\in D}$ and an associated vector sequence $(x_\alpha)_{\alpha \in D}$.

An endomorphism $v$ of $V$ is called a \textbf{connector} for $u$ with respect to the vector sequence
$(x_\alpha)_{\alpha \in D}$ whenever it acts as follows on the basis $(t^k x_\alpha)_{\alpha \in D, 0 \leq k<n_\alpha}$:
For all $\alpha \in D$ such that $n_\alpha<+\infty$, we have $v(t^{n_{\alpha}-1}\,x_\alpha)=x_{\alpha+1}$ mod $V_\alpha$,
and all the other vectors are mapped to $0$.
\end{Def}

This definition is motivated by the following result:

\begin{prop}\label{connector}
Let $u$ be an endomorphism of a vector space $V$.
Let $(V_\alpha)_{\alpha \in \kappa}$ be a stratification of $V^u$, with attached dimension sequence
$(n_\alpha)_{\alpha\in \kappa}$ and an associated vector sequence $(x_\alpha)_{\alpha \in \kappa}$.

Assume that if $\kappa$ has a maximum $M$ then $n_M=+\infty$. Then, for any connector
$v$ for $u$ with respect to $(x_\alpha)_{\alpha \in \kappa}$, the endomorphism $u+v$ is elementary.
\end{prop}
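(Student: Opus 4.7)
The plan is to exhibit an explicit $\F[t]$-basis of $V^{u+v}$ witnessing freeness. Without loss of generality, assume $\kappa$ is an ordinal. I would first partition $\kappa$ into maximal \emph{chains}, where a chain is a set of consecutive ordinals $C=\{\alpha_0,\alpha_0+1,\ldots\}$ (either terminating at some $\alpha_k$ with $n_{\alpha_k}=+\infty$, or extending indefinitely) such that $n_{\alpha_i}<+\infty$ for every non-terminal index $i$, and whose starting point $\alpha_0$ is $0$, a limit ordinal, or the successor of some $\beta$ with $n_\beta=+\infty$. The hypothesis on the maximum of $\kappa$ is precisely what prevents a chain from terminating at a finite-dimensional stratum with no available $x_{\alpha+1}$ to receive the connector's jump.

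For each chain $C$ with start $\alpha_0$, I would show by induction that the iterates $(u+v)^j(x_{\alpha_0})$ traverse the basis vectors attached to $C$ with controlled leading terms. Setting $N_0:=0$ and $N_{i+1}:=N_i+n_{\alpha_i}$ whenever $n_{\alpha_i}<+\infty$, the claim is that for every $\alpha_i\in C$ and $0\le r<n_{\alpha_i}$,
$$(u+v)^{N_i+r}(x_{\alpha_0})=t^r x_{\alpha_i}+w_{i,r},$$
where $w_{i,r}$ lies in the $\F$-span of those basis vectors $t^k x_\beta$ with $(\beta,k)$ lexicographically strictly smaller than $(\alpha_i,r)$. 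The delicate case of the induction is the chain-crossing $r=n_{\alpha_i}-1$, where $(u+v)(t^{n_{\alpha_i}-1}x_{\alpha_i})=t^{n_{\alpha_i}}x_{\alpha_i}+v(t^{n_{\alpha_i}-1}x_{\alpha_i})$: the first summand lies in $V_{\alpha_i}$ and decomposes into lex-lower basis vectors, while the connector clause supplies $x_{\alpha_{i+1}}$ modulo $V_{\alpha_i}$, installing $t^0 x_{\alpha_{i+1}}$ as the new leading term. In the remaining cases one has $v(t^r x_{\alpha_i})=0$ and the leading term advances to $t^{r+1}x_{\alpha_i}$. A case-by-case inspection then confirms that $(u+v)(w_{i,r})$ contributes only lex-lower terms, using that any connector jump $\beta\mapsto\beta+1$ forces both ordinals to lie in the same chain.

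Gathering these families over all chains yields $\calF:=\{(u+v)^j x_{\alpha_0(C)}\,:\,C\text{ chain},\,j\in\N\}$, whose leading-term map is a bijection onto the natural basis $(t^r x_\alpha)_{\alpha\in\kappa,\,0\le r<n_\alpha}$ of $V$, and whose change-of-basis matrix is unitriangular with respect to the lex well-order on $\kappa\times\N$. Transfinite induction then shows $\calF$ is itself an $\F$-basis of $V$, and rewriting $\calF=\{t^j\cdot x_{\alpha_0(C)}\}_{C,j}$ with $t$ acting as $u+v$ exhibits $V^{u+v}$ as the free $\F[t]$-module $\bigoplus_C \F[t]\cdot x_{\alpha_0(C)}$, so $u+v$ is elementary. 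The main technical obstacle is the bookkeeping in the inductive claim: because $w_{i,r}$ may involve basis vectors attached to \emph{other} chains, one must carefully verify that the action of $u+v$ on any such contribution stays lex-below the advancing leading term.
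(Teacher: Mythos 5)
Your proposal is correct and follows essentially the same route as the paper: your maximal chains are exactly the paper's chain-start set $D$ (ordinals with no predecessor, or whose predecessor has an infinite stratum dimension) together with the lengths $m_\alpha$, and the heart of your induction --- the connector installs $x_{\alpha+1}$ as the new leading term while $u$ and the lower-order corrections only contribute lex-lower vectors --- is the same computation the paper performs inside each quotient $W_\alpha/\sum_{\beta<\alpha}W_\beta$ via Lemma \ref{elementarylemma}. The only difference is packaging: you certify freeness by a direct unitriangular change of basis over the lex well-order on $\kappa\times\N$, whereas the paper assembles the chains into a new stratification of $V^{u+v}$ whose dimension sequence is constantly $+\infty$ and concludes by Lemma \ref{allinfinity}.
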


\begin{proof}
Without loss of generality, we can assume that $\kappa$ is an ordinal.

We define $D$ as the set of all $\alpha \in \kappa$ such that
either $\alpha$ has no predecessor or $\alpha$ has a predecessor and $n_{\alpha-1}=+\infty$.
Note that $D$ is a non-empty well-ordered set.

Set $w:=u+v$.

Fix $\alpha \in D$. Either $\alpha+k \in \kappa$ and $n_{\alpha+k}<+\infty$ for all $k \in \N$, in which case we set $m_\alpha:=+\infty$, otherwise we denote by $m_\alpha$ the smallest positive integer $k$ such that $\alpha+k-1 \in \kappa$
and $n_{\alpha+k-1}=+\infty$ (it exists because if $\kappa$ has a maximum $M$ then $n_M=+\infty$).
In any case, we set
$$W_\alpha:=\sum_{ k<m_\alpha} V_{\alpha+k}=\underset{ k<m_\alpha}{\bigcup} V_{\alpha+k.}$$
We shall prove that $(W_\alpha)_{\alpha \in D}$ is a stratification of $V^w$
and that the corresponding dimension sequence takes no finite value. Lemma \ref{allinfinity} will then imply that $w$ is elementary.

To help us, we need additional notation. Let $\beta \in \kappa$.
If $\beta \not\in D$ then $\beta$ has a predecessor $\beta-1$.
As there is no infinite decreasing sequence in $\kappa$,
it follows that there is a uniquely-defined element $g(\beta)\in D$ together with an non-negative integer $m$ such that
$g(\beta)+m=\beta$ and $g(\beta)+k \not\in D$ for all $k \in \lcro 1,m\rcro$.
It follows from the above definition
that
$$V_\beta \subset W_{g(\beta)}.$$

Now, for all $\beta \in \kappa$, the endomorphism $v$ maps $V_\beta$ into $V_{\beta+1}$
unless $n_\beta=+\infty$ in which case $V_\beta$ is stable under $v$.
Hence, we deduce from the definition of $m_\alpha$ that $W_\alpha$ is stable under $v$.
Moreover, we readily see that it is stable under $u$, and we conclude that $W_\alpha$ is a submodule of $V^w$.

Next, we see that $(W_\alpha)_{\alpha \in D}$ is increasing.
Let indeed $(\alpha,\beta)\in D^2$ be such that $\alpha<\beta$.
From the definition of $m_\alpha$, it follows that $\alpha+k<\beta$ for every integer $k$
such that $0 \leq k<m_\alpha$, and hence $W_\alpha \subset \underset{\gamma \in \kappa, \gamma<\beta}{\sum} V_\gamma
\subsetneq V_\beta \subset W_\beta$.

Finally, let us fix $\alpha \in D$.
Denote by $y$ the class of $x_\alpha$ in the quotient module
$$E:=W_\alpha/\underset{\beta \in D, \; \beta<\alpha}{\sum} W_\beta.$$
Let us prove that $E$ is non-zero and is the free module generated by $y$.
We start by proving that
\begin{equation}
\label{eq1}
\underset{\beta \in D, \; \beta<\alpha}{\sum} W_\beta=\underset{\beta \in \kappa, \; \beta<\alpha}{\sum} V_\beta.
\end{equation}
Let $\beta \in \kappa$ be such that $\beta<\alpha$.
Then, $V_\beta \subset W_{g(\beta)}$ with $g(\beta)<\alpha$ and $g(\beta)\in D$.
Conversely, let $\beta \in D$ be such that $\beta<\alpha$.
Then, $\beta+k<\alpha$ for all $k$ such that $0 \leq k<m_\beta$, and hence it follows from the definition of $W_\beta$ that
$W_\beta \subset \underset{\gamma \in \kappa, \; \gamma<\alpha}{\sum} V_\gamma$.

Using equality \eqref{eq1}, we deduce that
$$E=\biggl(\sum_{0 \leq k<m_\alpha} V_{\alpha+k}\biggr)/\biggl(\underset{\beta \in \kappa, \; \beta<\alpha}{\sum} V_\beta\biggr).$$
Assume first that $m_\alpha=+\infty$. Then, $n_{\alpha+k}$ is finite for each $k\in \N$
and we set
\begin{multline*}
(e_n)_{n \in \N}=\bigl(x_\alpha,u(x_\alpha),\dots,u^{n_\alpha-1} (x_\alpha),x_{\alpha+1},u(x_{\alpha+1}),\dots,
u^{n_{\alpha+1}-1} (x_{\alpha+1}),\dots,\\
x_{\alpha+k},u(x_{\alpha+k}),\dots,
u^{n_{\alpha+k}-1} (x_{\alpha+k}),\dots\bigr).
\end{multline*}
If $m_\alpha$ is finite, we set
\begin{multline*}
(e_n)_{n \in \N}=\bigl(x_\alpha,u(x_\alpha),\dots,u^{n_\alpha-1} (x_\alpha),x_{\alpha+1},u(x_{\alpha+1}),\dots, u^{n_{\alpha+1}-1}(x_{\alpha+1}),\dots,\\
x_{\alpha+m_\alpha-1},u(x_{\alpha+m_\alpha-1}),\dots,u^l(x_{\alpha+m_\alpha-1}),\dots\bigr)
\end{multline*}
In any case we know from \eqref{complet} that, for any integer $k$ such that $0 \leq k<m_\alpha$, the vectors
$x_\alpha,\dots,u^{n_{\alpha+k}-1}(x_{\alpha+k})$ are linearly independent and
$$\Vect\bigl(x_\alpha,\dots,u^{n_{\alpha+k}-1}(x_{\alpha+k})\bigr)\oplus \biggl(\underset{\beta \in \kappa, \; \beta<\alpha}{\sum} V_\beta\biggr)
=V_{\alpha+k.}$$

Next, we prove that $w(e_n)=e_{n+1}$ mod $\underset{\beta \in \kappa, \; \beta<\alpha}{\sum} V_\beta+
\Vect(e_0,\dots,e_n)$ for all $n \in \N$.
Let indeed $n \in \N$. Then, $e_n=t^l x_{\alpha+k}$ for some $0 \leq k<m_\alpha$ and some $0 \leq l<n_{\alpha+k}$.
If $l<n_{\alpha+k}-1$ then we know that $e_{n+1}=u(e_n)$ and $v(e_n)=0$, whence $w(e_n)=e_{n+1}$.
Assume that $l=n_{\alpha+k}-1$. Then, $u(e_n) \in V_{\alpha+k}$ whereas $v(e_n)-e_{n+1} \in V_{\alpha+k}$,
and hence $w(e_n)-e_{n+1} \in V_{\alpha+k}=\underset{\beta \in \kappa, \; \beta<\alpha}{\sum} V_\beta+\Vect(e_0,\dots,e_n)$.

Hence, Lemma \ref{elementarylemma} applies to the endomorphism $\overline{w}$ of $E$ induced by $w$,
and it yields that the resulting module $E^{\overline{w}}$ is non-zero and free with generator $y$ (the class of $x_\alpha$ in $E$).
Therefore, $(W_\alpha)_{\alpha \in D}$ is a stratification of $V^w$ and all the terms of its dimension sequence
equal $+\infty$. By Lemma \ref{allinfinity}, we conclude that $w$ is elementary.
\end{proof}

\section{Decompositions into four quadratic operators}\label{theo4proofsection}

Here, we shall prove Theorem \ref{theo4}. Combining Remark \ref{canonicalremark} with Theorem \ref{theo2},
one sees that we only need to prove the following result.

\begin{prop}\label{reductiontoelementary}
Let $u$ be an endomorphism of an infinite-dimensional vector space $V$, and let $a$ and $b$ be scalars.
Then, there exist endomorphisms $u_1$ and $u_2$ of $V$ such that $u-u_1-u_2$ is elementary,
$u_1^2=a\,u_1$ and $u_2^2=b\,u_2$.
\end{prop}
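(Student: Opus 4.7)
The plan is to apply Proposition \ref{connector}: I will construct endomorphisms $u_1,u_2\in\End(V)$ with $u_1^2=au_1$ and $u_2^2=bu_2$ such that $-u_1-u_2$ is a connector for $u$ with respect to some stratification; Proposition \ref{connector} then yields that $u-u_1-u_2=u+(-u_1-u_2)$ is elementary, which is what is required. By Remark \ref{canonicalremark} I may assume $p_1(t)=t^2-at$ and $p_2(t)=t^2-bt$.

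The first step is to secure a stratification $(V_\alpha)_{\alpha<\kappa}$ of $V^u$ fitting the hypothesis of Proposition \ref{connector}: either $\kappa$ is a limit ordinal or $\kappa$ has a maximum $M$ with $n_M=+\infty$. Such a stratification always exists when $V$ is infinite-dimensional. Let $T\subseteq V$ denote the $\F[t]$-torsion submodule of $V^u$; if $T\neq V$, then $V/T$ is torsion-free and its natural stratification by free cyclic subquotients (each contributing $n_\alpha=+\infty$) can be appended after any stratification of $T$, yielding a combined stratification whose last stratum has $n_M=+\infty$. If instead $T=V$, then $V^u$ is pure torsion and infinite-dimensional, and by adding finite-dimensional cyclic pieces one at a time one can arrange the process to exhaust $V$ only at a limit ordinal. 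Fix an attached vector sequence $(x_\alpha)_{\alpha<\kappa}$, giving a basis $(t^k x_\alpha)_{\alpha<\kappa,\ 0\le k<n_\alpha}$ of $V$.

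Each ordinal $\alpha$ admits a unique decomposition $\alpha=\lambda+n$ with $\lambda$ either zero or a limit ordinal and $n\in\N$; declare $\alpha$ \emph{even} when $n$ is even and \emph{odd} otherwise, so that consecutive ordinals have opposite parities. I define $u_1,u_2\in\End(V)$ by prescribing their action on the basis: set $u_1=u_2=0$ on every basis vector $t^kx_\alpha$ with $k<n_\alpha-1$ and on every basis vector of a block with $n_\alpha=+\infty$; for each \emph{tail} $t_\alpha:=t^{n_\alpha-1}x_\alpha$ (with $n_\alpha$ finite) set
$$u_1(t_\alpha)=a\,t_\alpha-x_{\alpha+1},\quad u_2(t_\alpha)=0\qquad\text{if $\alpha$ is even,}$$
with the roles of $(u_1,a)$ and $(u_2,b)$ swapped when $\alpha$ is odd. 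The hypothesis on $\kappa$ ensures that $\alpha+1\in\kappa$ for every tail index $\alpha$, so $x_{\alpha+1}$ is always defined.

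The verifications are then routine. For $u_1^2=au_1$: the identity is trivial on basis vectors where $u_1$ vanishes; on an even tail $t_\alpha$ one computes $u_1^2(t_\alpha)=a\,u_1(t_\alpha)-u_1(x_{\alpha+1})$, and $u_1(x_{\alpha+1})=0$ because $\alpha+1$ is odd, so $x_{\alpha+1}$ is either a non-tail (killed by $u_1$) or an odd tail (also killed by $u_1$); symmetrically $u_2^2=bu_2$. For the connector property, $(-u_1-u_2)(t_\alpha)$ equals $x_{\alpha+1}-a\,t_\alpha$ or $x_{\alpha+1}-b\,t_\alpha$ depending on parity, both congruent to $x_{\alpha+1}$ modulo $V_\alpha$ since $t_\alpha\in V_\alpha$, while $-u_1-u_2$ vanishes on non-tails; hence $-u_1-u_2$ is a connector for $u$ with respect to $(x_\alpha)$, and Proposition \ref{connector} concludes. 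The main obstacle is the opening step of producing a stratification that satisfies the max-condition; the remainder is a transfinite adaptation of the parity bookkeeping already used in the proof of Theorem \ref{theo2}.
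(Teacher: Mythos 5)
The closing part of your argument (the parity construction of $u_1,u_2$, the check that $u_1^2=au_1$ and $u_2^2=bu_2$, and that $-u_1-u_2$ is a connector, followed by an appeal to Proposition \ref{connector}) is correct and coincides with Case 1 of the paper's proof of Proposition \ref{from4to2}. The gap is in your opening existence step. The assertion that, when $T\neq V$, the torsion-free module $V/T$ admits a ``natural stratification by free cyclic subquotients (each contributing $n_\alpha=+\infty$)'' is false in general: by Lemma \ref{allinfinity}, a module with a stratification whose dimension sequence is constantly $+\infty$ is free, and a torsion-free $\F[t]$-module need not be free. Concretely, take $V:=\F(t)$ viewed as an $\F$-vector space and $u$ equal to multiplication by $t$; then $V^u=\F(t)$ is torsion-free (so $T=0$), but it is not free (any two elements are $\F[t]$-dependent, so a basis would be a single generator, while $\F(t)$ is not a finitely generated $\F[t]$-module), hence it has no stratification with all strata infinite-dimensional and your recipe produces nothing for this $u$. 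The conclusion you want still holds in this example (a greedy stratification over a countable generating family is indexed by $\omega$ and has no maximum), but not by the argument you give; likewise your torsion case rests on the unproved claim ``one can arrange the process to exhaust $V$ only at a limit ordinal'', which needs the counting argument of Proposition \ref{existstrat} or something equivalent.

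The statement you actually need -- every infinite-dimensional $V^u$ has a stratification with either no maximal index or last dimension $+\infty$ -- is true, but the right dichotomy is finitely generated versus not, rather than torsion versus non-torsion. If $V^u$ is not finitely generated, build the stratification greedily from a generating family of minimal (infinite) cardinality $\mu$: the index set then has order type exactly $\mu$, a limit ordinal, so there is no maximum. If $V^u$ is finitely generated and infinite-dimensional, then $V/T$ is free of finite rank at least $1$, and a stratification listing the finite-dimensional torsion part first and the free rank-one pieces last ends with $n_M=+\infty$. With this repair your route is legitimate and in fact bypasses the heavier part of the paper's argument: Proposition \ref{existstrat} only guarantees that the \emph{first} stratum is infinite-dimensional when the index set is finite, which forces the paper, in Case 2 of Proposition \ref{from4to2}, to modify $u$ by $w=\pi\circ u\circ(\id-\pi)$, reorder the strata so that $V_0$ comes last, and absorb the correction vectors $y_\alpha$ into $u_1$ and $u_2$; your plan makes all of that unnecessary, but only once the existence of the well-placed stratification is actually proved.
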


We will prove Proposition \ref{reductiontoelementary} by
constructing a ``well-behaved" stratification of $V^u$.
In the first section, we perform such a construction, and in the
subsequent one we use it to construct the claimed endomorphisms $u_1$ and $u_2$ by way of a connector.

\subsection{On the existence of a well-behaved stratification}

\begin{prop}\label{existstrat}
Let $V$ be an $\F[t]$-module with infinite dimension as a vector space over $\F$.
Then, there is a stratification $(V_\alpha)_{\alpha \in \kappa}$ of $V$ such that:
\begin{enumerate}[(a)]
\item $\kappa$ is a cardinal;
\item If $\kappa$ is finite then $\dim_\F V_0=+\infty$.
\end{enumerate}
\end{prop}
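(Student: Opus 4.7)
The plan is to define $\kappa$ as the least cardinality of an $\F[t]$-generating subset of $V$ and then to build, by transfinite recursion, a stratification of $V$ whose index set is exactly this cardinal $\kappa$. I would split into two cases depending on whether $\kappa$ is infinite or finite, the latter being needed to force condition (b).

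\emph{Infinite case.} I would fix a generating family $(y_\alpha)_{\alpha < \kappa}$ of the $\F[t]$-module $V$ and build a vector sequence $(x_\alpha)_{\alpha < \kappa}$ by transfinite recursion while maintaining the invariants $x_\alpha \notin M_\alpha$, where $M_\alpha := \sum_{\beta < \alpha} \F[t]\, x_\beta$, and $y_\alpha \in M_{\alpha+1}$. At each stage $\alpha$, the submodule $M_\alpha$ is generated by at most $|\alpha|$ elements. Since $\kappa$ is a cardinal, i.e., an initial ordinal, one has $|\alpha| < \kappa$, and by minimality of $\kappa$ this forces $M_\alpha \neq V$, so there exists a vector in $V \setminus M_\alpha$. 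To ensure that each $y_\alpha$ enters the submodule, I would set $x_\alpha := y_\alpha$ when $y_\alpha \notin M_\alpha$, and pick any $x_\alpha \in V \setminus M_\alpha$ otherwise. The family defined by $V_\alpha := M_{\alpha+1}$ is then increasing, each quotient $V_\alpha/M_\alpha$ is nonzero and cyclic (generated by the class of $x_\alpha$), and $V = \sum_{\alpha < \kappa} V_\alpha$ since all $y_\alpha$ eventually land in some $M_{\alpha+1}$. Condition (b) is vacuous here.

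\emph{Finite case.} I would first observe that any finite generating family $y_0, \ldots, y_{\kappa-1}$ of $V$ must contain at least one $y_i$ with $\dim_\F \F[t]\, y_i = +\infty$: otherwise each cyclic submodule $\F[t]\, y_i \simeq \F[t]/\mathrm{ann}(y_i)$ would be finite-dimensional and their finite sum $V$ would be finite-dimensional, contradicting the hypothesis. After relabelling, I would take $x_0 := y_0$ with $\dim_\F \F[t]\, x_0 = +\infty$, so that $V_0 := \F[t]\, x_0$ already has infinite $\F$-dimension, and then build the rest of the stratification exactly as in the previous case, picking at each step the next $y_i$ that is not yet in the current submodule. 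Since $\kappa$ is finite, this process terminates after at most $\kappa$ steps, yielding a stratification indexed by a finite cardinal with $\dim_\F V_0 = +\infty$, so (b) holds.

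The main conceptual obstacle lies in the infinite case, namely the interplay of two properties of $\kappa$ that one must use simultaneously: \emph{initiality}, to guarantee $|\alpha| < \kappa$ for every $\alpha < \kappa$ and hence control over the number of generators of $M_\alpha$; and \emph{minimality}, which turns that control into the strict inclusion $M_\alpha \subsetneq V$ at every stage. Once this is set up, the checking that $(V_\alpha)_{\alpha \in \kappa}$ is a stratification, that it covers all of $V$, and that the indexing set is actually the cardinal $\kappa$ (not just some ordinal of cardinality $\kappa$) is straightforward transfinite bookkeeping.
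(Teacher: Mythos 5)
Your proof is correct, and it takes a noticeably different route from the paper's, so let me compare. The paper works with the $\F$-dimension $\nu$ of $V$ and a vector-space basis $(e_k)_{k\in\nu}$: it runs a transfinite construction adding $\F[t]e_k$ for the least basis vector not yet covered, and then argues \emph{a posteriori}: either the process sweeps all of $\nu$ (so the index set is the cardinal $\nu$), or it halts early, in which case a counting estimate $\dim_\F V \leq |\alpha|\cdot\aleph_0 < \nu$ forces $\nu=\aleph_0$ and a finite index set; property (b) is then secured by the upfront precaution of choosing $e_0$ outside the torsion submodule whenever possible, combined with the observation that a finite stratification of an infinite-dimensional module must involve a non-torsion generator. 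You instead index by the least cardinality $\kappa$ of an $\F[t]$-generating set and use its minimality to show the recursion can never exhaust $V$ before stage $\kappa$ (so the index set is a cardinal by design, with no case analysis on termination), and you handle (b) directly: a finite generating family of an infinite-dimensional module contains a non-torsion element, which you place first so that $V_0=\F[t]x_0$ is infinite-dimensional. Your verification that the resulting data is a stratification is exactly the paper's ``converse'' remark about sequences $(x_\alpha)$ with $x_\alpha\notin\sum_{\beta<\alpha}\F[t]x_\beta$ and $\sum_\alpha \F[t]x_\alpha=V$, so that part is indeed routine. What your approach buys is a cleaner structural invariant (the minimal generating cardinal, which makes condition (a) automatic) and a more transparent treatment of (b); what the paper's buys is that it never needs the notion of minimal generating cardinality, only the $\F$-dimension and elementary cardinal arithmetic. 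Both arguments are complete; note only that in your finite case the number of steps is in fact exactly $\kappa$ (by minimality), though for (a) it suffices, as you say, that any finite ordinal is a cardinal.
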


\begin{proof}
We construct such a stratification by transfinite induction.
First we denote by $\nu$ the dimension of the $\F$-vector space $V$, seen as a cardinal.
We choose a basis $(e_k)_{k \in \nu}$ of $V$.
If $V$ is not a torsion module, we can further assume that $e_0$ does not belong to the torsion submodule of
$V$, i.e.\ that $\F[t]\,e_0$ is a free submodule of $V$.
Then, we construct an ordinal $\kappa \leq \nu$ and an increasing sequence $(V_\alpha)_{\alpha \in \kappa}$
of submodules of $V$ as follows.

We put $V_0:=\F[t]\,e_0$.

Let $\alpha \in \nu$ and assume that we have constructed an increasing sequence $(V_\beta)_{\beta<\alpha}$
of submodules of $V$ such that, for all $\beta<\alpha$:
\begin{enumerate}[(i)]
\item The $\F[t]$-module $V_\beta/\underset{\gamma<\beta}{\sum} V_\gamma$ is non-zero and has a generator;
\item The vector $e_\beta$ belongs to $V_\beta$.
\end{enumerate}
Put $W:=\underset{\beta<\alpha}{\sum} V_\beta$.
If $V=W$, then the process terminates at $\alpha$.
Otherwise, we take the least $k \in \nu$ such that $e_k \not\in W$ (note that $\alpha \leq k$), and we put
$V_\alpha:=W+\F[t] e_k$. The module $V_\alpha/W$ is non-zero and has a generator (namely, the class of $e_k$).
Finally, we note that $e_{\alpha} \in V_\alpha$. Hence, the inductive step is climbed.

Assume first that the process never terminates. Then, by condition (ii),
we have $V=\underset{\alpha \in \nu}{\sum} V_\alpha$.
It follows that $(V_\alpha)_{\alpha \in \nu}$ is a stratification of $V$, and $\nu$ is an infinite cardinal.

In the rest of the proof, we assume that the process terminates at some ordinal $\alpha \in \nu$,
so that $(V_\beta)_{\beta<\alpha}$ is a stratification of $V$.
First of all, we prove that $\nu$ is countable:
Let us take an associated vector sequence $(x_\beta)_{\beta <\alpha}$. Then, we know that
$V=\underset{\beta<\alpha}{\sum} \F[t] x_\beta$, and hence, denoting by $\mu$ the cardinality of $\alpha$, we find
$$\dim_\F V \leq \mu \times \aleph_0.$$
Note that $\mu<\nu$.
If $\aleph_0<\nu$, then $\mu \times \aleph_0<\nu$ and we have a contradiction.
If follows that $\nu=\aleph_0$ and hence $\alpha$ is a finite ordinal.
Next, as $V$ is infinite-dimensional at least one of the $\F$-vector spaces $\F[t] x_\beta$, for $\beta<\alpha$,
must be infinite-dimensional. Hence, $V^u$ is not a torsion module and by our assumptions $\dim_\F V_0=\dim_\F \F[t]e_0=+\infty$.
Hence, the claimed conclusion follows.
\end{proof}

\subsection{The reduction to an elementary endomorphism}

We are now ready to prove Theorem \ref{theo4}.
Here is the key step:

\begin{prop}\label{from4to2}
Let $u$ be an endomorphism of a vector space $V$.
Let $(V_\alpha)_{\alpha \in \kappa}$ be a stratification of $V^u$ that is indexed over an ordinal $\kappa$.
Assume that $\kappa$ is a limit ordinal or $V_0$ is infinite-dimensional.

Let $a$ and $b$ be scalars. Then, there exist endomorphisms $u_1$ and $u_2$ of $V$
such that $u_1^2=a\,u_1$, $u_2^2=b\, u_2$ and $u-(u_1+u_2)$ is elementary.
\end{prop}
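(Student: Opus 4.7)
The plan is to apply Proposition~\ref{connector} to produce a connector $v$ for $u$ such that $u+v$ is elementary, and then to realize $-v$ as a sum $u_1+u_2$ with the prescribed quadratic annihilators, so that $u-u_1-u_2=u+v$ is elementary, which is the desired conclusion. The two sub-tasks are (i) ensuring the hypothesis of Proposition~\ref{connector} is met on a suitable stratification, and (ii) explicitly decomposing the connector into two quadratic pieces.

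For (i), if $\kappa$ is a limit ordinal, Proposition~\ref{connector} applies directly since its hypothesis on the maximum is vacuous. Otherwise $\kappa=M+1$ is a successor and $V_0$ is infinite-dimensional; here I would rotate the stratification by setting $V'_\alpha:=\sum_{1\leq\gamma\leq\alpha+1}\F[t]\,x_\gamma$ for $\alpha<M$ and $V'_M:=V$. The final quotient $V/V'_{M-1}\cong V_0/(V_0\cap V'_{M-1})$ is infinite-dimensional (so that the new $n'_M$ equals $+\infty$) precisely when $V_0\cap V'_{M-1}=0$, since $V_0\cong\F[t]$ has only finite-dimensional proper quotients otherwise. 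In the degenerate situation where this intersection is nonzero, one must either find a modified generator of $V_0$ (subtracting suitable elements of $V'_{M-1}$) or argue directly that $V$ is a free cyclic $\F[t]$-module so that $u$ is already elementary and one can take $u_1=u_2=0$.

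For (ii), I adapt the construction in the proof of Theorem~\ref{theo2} block by block. Using the block decomposition $(W_\alpha)_{\alpha\in D}$ from the proof of Proposition~\ref{connector}, each block has a canonical countable basis $(e_n^{(\alpha)})_{n\in\N}$ listed in block order, and the connector $v$ acts nontrivially only at a set $S_\alpha\subseteq\N$ of ``top'' indices (those $n$ where a finite substratum transitions to the next). Define $u_1(e_n^{(\alpha)}):=a\,e_n^{(\alpha)}-e_{n+1}^{(\alpha)}$ for $n\in S_\alpha$ with $n$ even, and $u_1:=0$ elsewhere; symmetrically set $u_2(e_n^{(\alpha)}):=b\,e_n^{(\alpha)}-e_{n+1}^{(\alpha)}$ for $n\in S_\alpha$ with $n$ odd, and $u_2:=0$ elsewhere. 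Since $n+1$ has opposite parity to $n$, one always has $u_1(e_{n+1}^{(\alpha)})=0$ when $n$ is an even top, and the Theorem~\ref{theo2}-style computation then yields $u_1^2=a\,u_1$ and $u_2^2=b\,u_2$. A direct check shows that $-u_1-u_2$ sends each top basis vector $t^{n_\beta-1}\,x_\beta$ to $x_{\beta+1}$ modulo the stratum $V_\beta$, and kills every other basis vector, so $-u_1-u_2$ is itself a connector for $u$ with respect to the (rotated) stratification; Proposition~\ref{connector} then gives that $u-u_1-u_2$ is elementary.

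The main obstacle is the rotation step in the successor case: the intersection $V_0\cap V'_{M-1}$ can be a nonzero proper submodule of $V_0\cong\F[t]$, which would collapse the top quotient to finite dimension and prevent a direct application of Proposition~\ref{connector}. Handling this cleanly probably requires a careful dichotomy, separating a ``good'' subcase (where the rotation works as stated) from a degenerate one that can be reduced to $V$ being cyclic and free over $\F[t]$, or else a multi-step modification of the attached vector sequence that guarantees the rotated stratification has an infinite-dimensional top stratum; once the right stratification is in place, the alternating recipe in (ii) is a fairly routine adaptation of the proof of Theorem~\ref{theo2}.
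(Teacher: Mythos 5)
Your limit-ordinal case is essentially the paper's Case~1: there the connector is split into two summands supported on alternating ``top'' vectors, and your parity-by-position bookkeeping works just as well as the paper's parity of ordinals. The gap is in the successor case. You keep the operator $u$ and try to re-stratify $V^u$ so that the infinite-dimensional stratum comes last, and you correctly identify that this needs $\F[t]x_0\cap V'_{M-1}=0$; but your two escape routes for the degenerate case do not work. Modifying the generator of $V_0$ cannot help, because the top quotient $V/V'_{M-1}$ depends only on the submodule $V'_{M-1}=\sum_{1\leq \gamma\leq M}\F[t]x_\gamma$ and not on which generator of the last stratum you choose. And the degenerate case does not force $V$ to be free cyclic: take $V=\F[t]\oplus \F[t]/(t)$ with $u$ the multiplication by $t$, $x_0=(1,0)$, $x_1=(1,\bar{1})$; then $\bigl(\F[t]x_0,\,V\bigr)$ is a stratification with $n_0=+\infty$, $n_1=1$, the rotated top quotient $V/\F[t]x_1$ is one-dimensional (since $t\,x_0=t\,x_1\in \F[t]x_1$), yet $V$ has torsion, so $u$ is not elementary and $u_1=u_2=0$ is not an option. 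In this small example one can rescue the rotation by the smarter choice $x_1'=(0,\bar{1})$, but you give no general procedure, and for an arbitrary successor ordinal $\kappa$ (possibly infinite, with infinitely many strata below the top) producing a stratification of $V^u$ with infinite-dimensional top stratum is a genuine module-theoretic problem that your sketch does not solve.

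The paper's proof avoids this problem entirely by perturbing the operator rather than the stratification: with $\pi$ the projection onto $V_0$ along $W:=\Vect\bigl(u^k(x_\alpha)\bigr)_{0<\alpha\leq M,\;0\leq k<n_\alpha}$, it sets $w:=\pi\circ u\circ(\id-\pi)$ and $u':=u-w$, so that the spaces $W_\alpha$ (with $V$ placed on top) form a stratification of $V^{u'}$ whose last dimension is $+\infty$; a connector $v$ for $u'$ then makes $u-(w-v)$ elementary, and it is $w-v$ -- not $-v$ -- that is split as $u_1+u_2$. The extra terms $y_\alpha$ coming from $w$ all lie in $V_0$, on which $u_1$ and $u_2$ are defined to vanish, so the identities $u_1^2=a\,u_1$ and $u_2^2=b\,u_2$ still hold. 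This perturbation-and-absorption step is the idea missing from your argument; without it (or a proved re-stratification statement for $V^u$ itself), the successor case remains open.
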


The combination of this result with Proposition \ref{existstrat} yields Proposition \ref{reductiontoelementary},
which in turn yields Theorem \ref{theo4}.

\begin{proof}
We take the dimension sequence $(n_\alpha)_{\alpha \in \kappa}$ of $V^u$ and an associated vector sequence
$(x_\alpha)_{\alpha \in \kappa}$.
Given an ordinal $\alpha$, there is a largest $k\in \N$ for which there exists an ordinal
$\gamma$ satisfying $\alpha=\gamma+k$: we say that $\alpha$ is \textbf{even} when $k$ is even,
and \textbf{odd} otherwise.

\vskip 2mm
\noindent \textbf{Case 1: $\kappa$ is a limit ordinal.} \\
We define two endomorphisms $u_1$ and $u_2$ of $V$ as follows on the basis
$(u^k(x_\alpha))_{\alpha \in \kappa, 0 \leq k<n_\alpha}$:

For each ordinal $\alpha \in \kappa$ such that $n_\alpha<+\infty$, we set
$$u_1\bigl(u^{n_\alpha-1}(x_\alpha)\bigr):=\begin{cases}
a  \,u^{n_\alpha-1}(x_\alpha)- x_{\alpha+1} & \text{if $\alpha$ is even} \\
0 & \text{if $\alpha$ is odd}
\end{cases}$$
$$u_2\bigl(u^{n_\alpha-1}(x_\alpha)\bigr):=\begin{cases}
0 & \text{if $\alpha$ is even} \\
b  \,u^{n_\alpha-1}(x_\alpha)- x_{\alpha+1} & \text{if $\alpha$ is odd}
\end{cases}$$
and $u_1$ and $u_2$ are required to map all the other basis vectors to $0$.
We check on the above basis that $u_1^2=a \,u_1$ and $u_2^2=b \,u_2$.
Indeed, let $\alpha$ be an even ordinal such that $n_\alpha<+\infty$.
Then, $u_1(u^{n_\alpha-1}(x_\alpha))=a \, u^{n_\alpha-1}(x_\alpha)- x_{\alpha+1}$ and $\alpha+1$
is odd whence $u_1(x_{\alpha+1})=0$, which leads to $u_1^2(u^{n_\alpha-1}(x_\alpha))=a \, u_1(u^{n_\alpha-1}(x_\alpha))$.
For any other basis vector $y$, we have $u_1(y)=0$ and hence $u_1^2(y)=0=a\, u_1(y)$.
The proof is similar for $u_2$.

Next, it is obvious that $v:=-u_1-u_2$ is a connector for $u$ with respect to the vector sequence $(x_\alpha)_{\alpha \in \kappa}$.
We deduce from Proposition \ref{connector} that $u-u_1-u_2$ is elementary.

 \vskip 2mm
\noindent \textbf{Case 2: $\kappa$ has a maximum $M$.} \\
Then, we know that $V_0$ is infinite-dimensional. If $M=0$, then $u$ is already elementary and we just take $u_1=u_2=0$. In the rest of the proof, we assume that $M>0$.
Set $W:=\Vect(u^k(x_\alpha))_{0<\alpha \leq M, \; 0 \leq k<n_\alpha}$
and note that $V=V_0\oplus W$. Denote by $\pi$ the projection from $V$ onto $V_0$ along $W$, and set $w:=\pi \circ u \circ (\id-\pi)$.
Then, $u':=u-w$ stabilizes both $V_0$ and $W$.
For $\alpha \in \kappa \setminus \{0\}$, set $W_\alpha:=\Vect(u^k(x_\beta))_{0<\beta \leq \alpha, \; 0 \leq k<n_\beta}$
and note that $u'$ stabilizes $W_\alpha$. Set finally $W_0:=V$.
Now, define $D:=\kappa$ with the same ordering on $\kappa \setminus \{0\}$, but with $\alpha<0$ for all $\alpha \in \kappa \setminus \{0\}$.
One sees that $D$ is still a well-ordered set and now $(W_k)_{k \in D}$ is a stratification
for $V^{u'}$ for which the last dimension equals $+\infty$. An associated vector sequence is $(x_k)_{k \in D}$.

Just like in Case 1, we define a connector $v$ of $u'$ for this stratification and this vector sequence.
Hence, $u-(w-v)$ is elementary.
In order to conclude, it only remains to split $(w-v)$ into $u_1+u_2$ where $u_1^2=a\, u_1$ and $u_2^2=b\, u_2$.

First of all, we see that $(w-v)(u^k(x_0))=0$ for all $k \in \N$.

For each $\alpha \in \kappa \setminus \{0\}$ and each integer $k$ such that $0 \leq k <n_\alpha$, either
$k \neq n_\alpha-1$ and hence $(w-v)(u^k(x_\alpha))=0$, or $k=n_\alpha-1$ and hence
there is a vector $y_\alpha$ of $V_0$ such that
$$(w-v)(u^k(x_\alpha))=\begin{cases}
a\, u^k(x_\alpha)-x_{\alpha+1}+y_\alpha & \text{if $\alpha$ is odd} \\
b\, u^k(x_\alpha)-x_{\alpha+1}+y_\alpha & \text{if $\alpha$ is even.}
\end{cases}$$
Now, we define endomorphisms $u_1$ and $u_2$ as follows on the basis $(u^k(x_\alpha))_{\alpha \in \kappa, 0 \leq k<n_\alpha}$:
For all $\alpha \in \kappa$ such that $n_\alpha<+\infty$,
$$u_1\bigl(u^{n_\alpha-1}(x_\alpha)\bigr)=\begin{cases}
a\,u^{n_\alpha-1}(x_\alpha)-x_{\alpha+1}+y_\alpha & \text{if $\alpha$ is odd} \\
0 & \text{if $\alpha$ is even,}
\end{cases}$$
and
$$u_2\bigl(u^{n_\alpha-1}(x_\alpha)\bigr)=\begin{cases}
0 & \text{if $\alpha$ is odd} \\
b\, u^{n_\alpha-1}(x_\alpha)-x_{\alpha+1}+y_\alpha & \text{if $\alpha$ is even,}
\end{cases}$$
and $u_1$ and $u_2$ are required to map all the other basis vectors to $0$.
Obviously $u_1+u_2=w-v$. On the other hand, as in Case 1 it is easily checked that
$u_1^2=a\, u_1$ and $u_2^2=b\, u_2$ by using the fact that $u_1$ and $u_2$ vanish everywhere on $V_0$
(and in particular they map any $y_\alpha$ to $0$).
Since $u-(u_1+u_2)$ is elementary, the proof is complete.
\end{proof}

Therefore, Theorem \ref{theo4} is now fully established.

\end{document}